\newsavebox{\@brx}
\newcommand{\llangle}[1][]{\savebox{\@brx}{\(\m@th{#1\langle}\)}%
	\mathopen{\copy\@brx\kern-0.5\wd\@brx\usebox{\@brx}}}
\newcommand{\rrangle}[1][]{\savebox{\@brx}{\(\m@th{#1\rangle}\)}%
	\mathclose{\copy\@brx\kern-0.5\wd\@brx\usebox{\@brx}}}
\newsavebox{\measure@tikzpicture}
	\edef\tikzscale{\pgfmathresult}%
\DeclarePairedDelimiter\norm{\lvert}{\rvert}
\let\save@mathaccent\mathaccent
\newcommand*\if@single[3]{%
	\setbox0\hbox{${\mathaccent"0362{#1}}^H$}%
	\setbox2\hbox{${\mathaccent"0362{\kern0pt#1}}^H$}%
	\ifdim\ht0=\ht2 #3\else #2\fi
}
\newcommand*\rel@kern[1]{\kern#1\dimexpr\macc@kerna}
\newcommand*\widebar[1]{\@ifnextchar^{{\wide@bar{#1}{0}}}{\wide@bar{#1}{1}}}
\newcommand*\wide@bar[2]{\if@single{#1}{\wide@bar@{#1}{#2}{1}}{\wide@bar@{#1}{#2}{2}}}
\newcommand*\wide@bar@[3]{%
	\begingroup
	\def\mathaccent##1##2{%
		\let\mathaccent\save@mathaccent
		\if#32 \let\macc@nucleus\first@char \fi
		\setbox\z@\hbox{$\macc@style{\macc@nucleus}_{}$}%
		\setbox\tw@\hbox{$\macc@style{\macc@nucleus}{}_{}$}%
		\dimen@\wd\tw@
		\advance\dimen@-\wd\z@
		\divide\dimen@ 3
		\@tempdima\wd\tw@
		\advance\@tempdima-\scriptspace
		\divide\@tempdima 10
		\advance\dimen@-\@tempdima
		\ifdim\dimen@>\z@ \dimen@0pt\fi
		\rel@kern{0.6}\kern-\dimen@
		\if#31
		\overline{\rel@kern{-0.6}\kern\dimen@\macc@nucleus\rel@kern{0.4}\kern\dimen@}%
		\advance\dimen@0.4\dimexpr\macc@kerna
		\let\final@kern#2%
		\ifdim\dimen@<\z@ \let\final@kern1\fi
		\if\final@kern1 \kern-\dimen@\fi
		\else
		\overline{\rel@kern{-0.6}\kern\dimen@#1}%
		\fi
	}%
	\macc@depth\@ne
	\let\math@bgroup\@empty \let\math@egroup\macc@set@skewchar
	\mathsurround\z@ \frozen@everymath{\mathgroup\macc@group\relax}%
	\macc@set@skewchar\relax
	\let\mathaccentV\macc@nested@a
	\if#31
	\macc@nested@a\relax111{#1}%
	\else
	\def\gobble@till@marker##1\endmarker{}%
	\futurelet\first@char\gobble@till@marker#1\endmarker
	\ifcat\noexpand\first@char A\else
	\def\first@char{}%
	\fi
	\macc@nested@a\relax111{\first@char}%
	\fi
	\endgroup
}
\def\irr#1{{\rm  Irr}(#1)}
\newcounter{intro}
\newtheorem{thm}{Theorem}
\newtheorem{lem}[thm]{Lemma}
\theoremstyle{remark}
\newtheorem{rem}[thm]{Remark}
\theoremstyle{definition}
\title[GVZ-groups]{A characterization of GVZ groups in terms of fully ramified characters}
\author{Shawn T. Burkett}
\address{Department of Mathematical Sciences, Kent State University, Kent,
	Ohio 44242, U.S.A.} \email{sburkett@math.kent.edu}
\author{Mark L. Lewis}
\address{Department of Mathematical Sciences, Kent State University, Kent,
	Ohio 44242, U.S.A.} \email{lewis@math.kent.edu}
\date{\today}
\keywords{GVZ groups; $p$-groups; fully ramified characters}
\subjclass[2010]{20C15}
\begin{document}

\begin{abstract}
In this paper, we obtain a characterization of GVZ-groups in terms of commutators and monolithic quotients.  This characterization is based on counting formulas due to Gallagher.
\end{abstract}

\maketitle
Throughout this paper, all groups are finite.  For a group $G$, we write $\irr G$ for the set of irreducible characters of $G$.  In this paper, we present a new characterization of GVZ-groups.  A group $G$ is a {\it GVZ-group} if every irreducible character $\chi \in \irr G$ satisfies that $\chi$ vanishes on $G \setminus Z(\chi)$.  

The term GVZ-group was introduced by Nenciu in \cite{AN12gvz}.  Nenciu continued the study of GVZ-groups in \cite{AN16gvz} and the second author further continued these studies in \cite{ML19gvz}.  In our paper \cite{ourpre}, we showed that GVZ-groups can be characterized in terms of another class of groups that have appeared in the literature.  

An element $g \in G$ is called {\it flat} if the conjugacy class of $G$ is $g[g,G]$.  In \cite{flatness}, they defined a group $G$ to be {\it flat} if every element in $G$ is flat.  In fact, groups satisfying this condition had been studied even earlier.  Predating each of these references, Murai \cite{OnoType} referred to such groups as {\it groups of Ono type}.  In \cite{flatness}, they proved that if $G$ is nilpotent and flat, then $G$ is a GVZ-group.  Improving this result, we prove in \cite{ourpre} that a group $G$ is a GVZ-group if and only if it is flat.

In this paper, we characterize GVZ-groups using fully ramified characters.  For a normal subgroup $N$ of $G$, we say that the character $\chi \in \irr G$ is {\it fully ramified} over $N$ if $\chi_N$ is homogeneous  and $\chi(g) = 0$ for every element $g\in G\setminus N$.  

Following the literature, a group $G$ is called {\it central type} if there is an irreducible character of $G$ that is fully ramified over the center $Z(G)$.  Results about central type groups are in \cite{DJ}, \cite{espuelas}, \cite{gagola}, and \cite{IH}. 

With this as motivation, we define an irreducible character $\chi$ of $G$ to be {\it central type} if $\chi$, considered as a character of $G/\ker(\chi)$, is fully ramified over $Z(G/\ker(\chi))$.  (I.e., $G/\ker{\chi}$ is a group of central type with faithful character $\chi$.)  It is not difficult to see that $G$ is a GVZ-group if and only if every character $\chi \in \irr G$ is of central type.

Recall from the literature that a group is called {\it monolithic} if it has a unique minimal normal subgroup. It is easy to see that if $N$ is a normal subgroup of $G$ and $G/N$ is monolithic, then $N$ appears as the kernel of some irreducible character of $G$. Also an irreducible character $\chi$ is called {\it monolithic} if the quotient group $G/\ker(\chi)$ is monolithic. Thus, monolithic quotients correspond to monolithic characters.

The purpose of this paper is to give a new characterization of central type characters based on ideas of Gallagher that are encapsulated in \cite[Theorem 1.19 and Lemma 1.20]{MI18}, thereby obtaining a new characterizations of GVZ-groups. In particular, we prove the following theorem.

\begin{thm}\label{gvz thm}
Let $G$ be a nonabelian group. Then the following are equivalent: 
\begin{enumerate}[label={\rm(\arabic*)}]
\item $G$ is a GVZ-group.  
\item For every monolithic character $\chi \in \irr G$ and for every element $g \in G \setminus Z (\chi)$, there exists an element $x \in G$ so that $[g,x] \in Z(\chi) \setminus \ker (\chi)$.
\item $G$ is nilpotent, and for every normal subgroup $N$ of $G$ for which $G/N$ is monolithic and for every element $g \in G$ satisfying $[g,G]\nleq N$, there exists an element $x\in G$ such that $[g,x]\notin N$ and $[[g,x],G]\le N$.
\end{enumerate}
\end{thm}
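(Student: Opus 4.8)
\medskip
\noindent\emph{Plan of proof.} The main tool is the Gallagher counting formula of \cite[Theorem 1.19 and Lemma 1.20]{MI18}: for a finite group $H$, a character $\psi\in\irr H$, and an element $g\in H$,
\[
\sum_{x\in H}\psi\big([g,x]\big)\;=\;\frac{|H|}{\psi(1)}\,\big|\psi(g)\big|^{2},
\]
and since the commutator $[g,x]$ assumes each of its values exactly $|C_H(g)|$ times as $x$ ranges over $H$, this rewrites as $\sum_{c}\psi(c)=\frac{|H:C_H(g)|}{\psi(1)}|\psi(g)|^{2}$, the sum running over the set of commutators $\{[g,x]:x\in H\}$. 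I would prove $(1)\Rightarrow(2)$, then $(2)\Rightarrow(1)$ (the substantive direction), and finally $(2)\Leftrightarrow(3)$, which is essentially a dictionary.

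\medskip
\noindent\emph{The translation $(2)\Leftrightarrow(3)$.} Monolithic characters are precisely the characters whose kernel $N$ has $G/N$ monolithic, and for such an $N$ the subgroup $Z(\chi)$ is the full preimage of $Z(G/N)$ and is independent of the choice of $\chi$ with $\ker\chi=N$. Under this correspondence ``$g\in G\setminus Z(\chi)$'' becomes ``$[g,G]\nleq N$'', and ``$[g,x]\in Z(\chi)\setminus\ker\chi$'' becomes ``$[g,x]\notin N$ and $[[g,x],G]\le N$'', so the quantified statements in $(2)$ and $(3)$ coincide word for word. What is left is the nilpotency clause in $(3)$: a GVZ-group is nilpotent. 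This is known; alternatively one checks that a GVZ-group satisfies $\chi(1)^{2}=|G:Z(\chi)|$ for all $\chi\in\irr G$ (expand $[\chi\overline\chi,1_{G}]=1$, using that $|\chi|=\chi(1)$ on $Z(\chi)$ and that $\chi$ vanishes off $Z(\chi)$), and a group with this degree equation throughout $\irr G$ is nilpotent. Hence $(3)\Rightarrow(2)$ is immediate, and once $(1)\Leftrightarrow(2)$ is available the chain $(2)\Rightarrow(1)\Rightarrow\text{nilpotent}$ yields $(2)\Rightarrow(3)$.

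\medskip
\noindent\emph{$(1)\Rightarrow(2)$.} Let $\chi\in\irr G$ be monolithic (monolithicity is in fact not used). Set $\overline G=G/\ker\chi$; quotients of GVZ-groups are GVZ, so $\overline G$ is a GVZ-group carrying the faithful character $\chi$, hence $\chi$ is of central type, i.e.\ fully ramified over $Z(\overline G)$, and in particular $\chi$ vanishes on $\overline G\setminus Z(\overline G)$. If $g\in G\setminus Z(\chi)$ then $\overline g:=g\ker\chi\notin Z(\overline G)$, so $\chi(\overline g)=0$, and the Gallagher formula gives $\sum_{c}\chi(c)=0$, the sum over $\{[\overline g,\overline x]:\overline x\in\overline G\}$. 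Every summand with $c\notin Z(\overline G)$ vanishes, while $c=1$ contributes $\chi(1)\ne0$; so some commutator $[\overline g,\overline x]$ lies in $Z(\overline G)\setminus\{1\}$, and lifting $\overline x$ to $x\in G$ gives $[g,x]\in Z(\chi)\setminus\ker\chi$.

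\medskip
\noindent\emph{$(2)\Rightarrow(1)$, and the main obstacle.} Induct on $|G|$. Condition $(2)$ passes to quotients (a monolithic character of $G/N_{0}$ inflates to a monolithic character of $G$, $(2)$ applies to it, and the conclusion projects back), so by induction every proper quotient of $G$ is a GVZ-group; in particular every $\chi\in\irr G$ with $\ker\chi\ne1$ is of central type, and we are reduced to a faithful $\chi\in\irr G$, for which $Z(G)$ is cyclic. If $G$ is not monolithic then, embedding $G$ into the direct product of two of its proper (hence GVZ, hence nilpotent) quotients shows $G$ is nilpotent; a $p$-group with cyclic centre is monolithic, so $G$ has at least two Sylow subgroups, each a proper quotient and so GVZ, whence $G$ is GVZ. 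So suppose $G$ is monolithic; applying $(2)$ to $\chi$ shows $Z(G)\ne1$, the socle $M$ is the unique subgroup of prime order $p$ in $Z(G)$, $\chi_{M}=\chi(1)\mu$ with $\mu$ faithful, and $G/M$ is GVZ by induction; we must deduce that $\chi$ is fully ramified over $Z(G)$. Writing $Z_{2}$ for the preimage of $Z(G/M)$, the range $g\in Z_{2}\setminus Z(G)$ is easy: there $x\mapsto[g,x]$ is a homomorphism $G\to M$, surjective by $(2)$, so $\sum_{x\in G}\chi([g,x])=\chi(1)\sum_{x\in G}\mu([g,x])=0$, and the Gallagher formula forces $\chi(g)=0$. \emph{The crux is the complementary range $g\notin Z_{2}$}, where $gM$ is noncentral in the GVZ-group $G/M$: here I would feed the Gallagher formulas into the Clifford theory of the characters of $G$ lying over the central character $\mu$, using the flatness and the degree equation of $G/M$, to conclude $\chi(g)=0$. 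I expect this last step to be the main difficulty; the remainder is bookkeeping or direct computation with the counting formula.
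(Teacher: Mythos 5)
There is a genuine gap, and you have located it yourself: the case $g\notin Z_2$ in your proof of $(2)\Rightarrow(1)$ is left as an unproven expectation, so the substantive direction of the theorem is not established. The root cause is that you are only using the easy direction of Gallagher's identity $\sum_{x}\psi([g,x])=\tfrac{|H|}{\psi(1)}|\psi(g)|^{2}$, namely ``$\psi(g)=0$ forces a commutator into $Z(\psi)\setminus\ker\psi$.'' What the argument actually needs is the \emph{converse} counting statement from \cite[Theorem 1.19 and Lemma 1.20]{MI18}: for a faithful $\vartheta\in\irr{Z(G)}$, the number of irreducible constituents of $\vartheta^{G}$ equals the number of classes of cosets $gZ(G)$ with $[g,D_G(g)]=1$, where $D_G(g)=\{x\in G: [x,g]\in Z(G)\}$. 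Hence if $[g,D_G(g)]\ne 1$ for every $g\notin Z(G)$ --- which is exactly condition $(2)$ applied to a single faithful character --- then $\vartheta^{G}$ has a unique constituent and that constituent is fully ramified. This converse cannot be squeezed out of the displayed identity alone, because for $g\notin Z_2$ the commutators $[g,x]$ with $x\notin D_G(g)$ land outside $Z(G)$ and contribute uncontrolled terms to the sum; that is precisely why your ``crux'' is genuinely stuck.

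With the counting direction in hand, your crux evaporates: in your induction, once you have reduced to a monolithic $G$ with faithful $\chi$, condition $(2)$ applied to $\chi$ says $[g,D_G(g)]\ne 1$ for all $g\in G\setminus Z(\chi)$, and the counting lemma immediately yields that $\chi$ is fully ramified over $Z(G)=Z(\chi)$ --- no Clifford theory over $\mu$ is needed. This is how the paper proceeds (its Lemmas on $D_G(g)$ and the resulting characterization of central type characters), except that it avoids your induction altogether: it first gets nilpotency from $(2)$ (every monolithic character is of central type, so $\chi(1)^2=\norm{G:Z(\chi)}$ and $Z(\chi)>\ker(\chi)$ for nonprincipal monolithic $\chi$, which forces nilpotency), then decomposes $G$ into Sylow subgroups, where \emph{every} irreducible character is monolithic because a $p$-group with a faithful irreducible character has cyclic center; condition $(2)$ then applies to every factor of every $\chi\in\irr G$. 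Your reductions (passage of $(2)$ to quotients, the non-monolithic case via embedding into a product of proper quotients, and the translation $(2)\Leftrightarrow(3)$) are fine, but note also that your $(2)\Rightarrow(3)$ is routed through the unproven $(2)\Rightarrow(1)$; the counting lemma repairs that as well, since it gives ``every monolithic character is of central type'' directly from $(2)$.
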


Our proof relies on the following lemma, which we will see is an immediate consequence of some arguments of Gallagher that can be found in \cite[Theorem 1.19 and Lemma 1.20]{MI18}.  For an element $g \in G$, we set $D_G (g) = \{ x \in G \mid [x,g] \in Z(G) \}$.  Observe that $D_G (g)/Z(G) = C_{G/Z(G)} (g Z(G))$, so $D_G (g)$ is always a subgroup of $G$. 

\begin{lem}\label{fr 1}
Let $G$ be a group.  If the character $\vartheta \in \irr {{Z} (G)}$ is faithful, then $\vartheta$ is fully ramified with respect to $G/{Z}(G)$ if and only if $[g,D_G(g)] \ne 1$ for every element $g \in G \setminus {Z} (G)$.
\end{lem}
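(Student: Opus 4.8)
The plan is to derive Lemma~\ref{fr 1} from Gallagher's character‑counting identity as packaged in \cite[Theorem 1.19 and Lemma 1.20]{MI18}. Throughout I write $Z = Z(G)$. Since $Z$ is central, $\vartheta$ is automatically $G$‑invariant, and for any $\chi \in \irr G$ the restriction $\chi_Z$ is a multiple of a single linear character $\mu_\chi \in \irr Z$, so that $\irr{G\mid\vartheta} = \{\chi \in \irr G : \mu_\chi = \vartheta\}$. The first step is to reduce "fully ramified" to a vanishing statement: I would show that $\vartheta$ is fully ramified with respect to $G/Z$ if and only if $\chi(g) = 0$ for every $\chi \in \irr{G\mid\vartheta}$ and every $g \in G \setminus Z$. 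One implication is immediate from the definition (the unique $\chi$ over $\vartheta$ vanishes off $Z$). For the converse, if every $\chi$ over $\vartheta$ vanishes off $Z$, then $1 = \lVert\chi\rVert^2 = \lvert Z\rvert\,\chi(1)^2/\lvert G\rvert$, forcing $\chi(1)^2 = \lvert G:Z\rvert$; combined with the identity $\sum_{\psi\in\irr{G\mid\vartheta}}\psi(1)^2 = \lvert G:Z\rvert$ this shows there is exactly one such $\chi$, and it is fully ramified over $Z$ with $\chi_Z = \chi(1)\vartheta$.

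Next I would bring in the Gallagher identity. The instance needed is that, for every $g \in G$,
\[
\sum_{\chi\in\irr{G\mid\vartheta}}\lvert\chi(g)\rvert^{2}
=\frac{\lvert C_G(g)\rvert}{\lvert Z\rvert}\sum_{\substack{z\in Z\\ gz\sim_G g}}\vartheta(z),
\]
where $\sim_G$ denotes $G$‑conjugacy. This is the content of \cite[Lemma 1.20]{MI18} in the present setting; alternatively one gets it directly by multiplying the second orthogonality relation $\sum_{\chi\in\irr G}\chi(g)\overline{\chi(gz)}$ (which equals $\lvert C_G(g)\rvert$ when $gz\sim_G g$ and $0$ otherwise) by $\vartheta(z)$, summing over $z\in Z$, and using $\chi(gz)=\mu_\chi(z)\chi(g)$ together with $\sum_{z\in Z}\vartheta(z)\overline{\mu_\chi(z)}$ being $\lvert Z\rvert$ or $0$ according as $\mu_\chi=\vartheta$ or not. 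Specializing to $g=1$ also supplies the dimension identity invoked in the first step.

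The remaining step is the group‑theoretic identification of the inner sum. Setting $S_g = \{z\in Z : gz\sim_G g\}$ and using $g^x = g[g,x]$, I would check that $g^x = gz$ with $z\in Z$ holds exactly when $z = [g,x]$ and $x \in D_G(g)$, so $S_g = \{[g,x] : x \in D_G(g)\}$; since such commutators are central, $[g,x_1x_2] = [g,x_2][g,x_1]$, and hence $S_g = [g,D_G(g)]$, a subgroup of $Z$. Because $\vartheta$ is faithful (so $\ker\vartheta = 1$), the sum $\sum_{z\in H}\vartheta(z)$ over a subgroup $H \le Z$ vanishes precisely when $H \neq 1$; thus $\sum_{z\in S_g}\vartheta(z) = 0$ if and only if $[g,D_G(g)] \neq 1$. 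Putting the three steps together: for $g \in G\setminus Z$ the left‑hand side of the displayed identity is zero exactly when $[g,D_G(g)]\neq 1$, and therefore $\vartheta$ is fully ramified with respect to $G/Z$ if and only if $[g,D_G(g)]\neq 1$ for every $g\in G\setminus Z$. I expect the only delicate point to be stating and invoking Gallagher's identity correctly with $N = Z$ and $\theta = \vartheta$ (and verifying that "fully ramified" is genuinely equivalent to the vanishing condition of the first step); the commutator bookkeeping in the last step is routine.
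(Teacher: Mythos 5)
Your proof is correct, and it is worth noting that it does not follow the paper's route exactly. The paper treats \cite[Theorem 1.19 and Lemma 1.20]{MI18} as a black box: the number of irreducible constituents of $\vartheta^G$ equals the number of conjugacy classes of cosets $gZ(G)$ with $[g,D_G(g)]=1$, the trivial coset always qualifies, and full ramification means a unique constituent, so the condition must fail for every $g\notin Z(G)$. You instead re-derive the underlying Gallagher identity from the second orthogonality relation, characterize full ramification by the vanishing of every $\chi\in\irr{G\mid\vartheta}$ off $Z(G)$ (via $\sum\chi(1)^2=\lvert G:Z(G)\rvert$ together with $\lVert\chi\rVert=1$), and then identify $\{z\in Z(G): gz\sim_G g\}$ with the subgroup $[g,D_G(g)]$ so that faithfulness of $\vartheta$ converts the character-sum criterion into the commutator condition. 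All the steps check out: the identity $\chi(gz)=\mu_\chi(z)\chi(g)$, the orthogonality of linear characters of $Z(G)$, the closure of $\{[g,x]:x\in D_G(g)\}$ under multiplication (because these commutators are central), and the fact that $\sum_{z\in H}\vartheta(z)=0$ exactly when $1\ne H\le Z(G)$ for faithful $\vartheta$. What your version buys is self-containedness and a pointwise statement (which $g$ obstruct full ramification), essentially reproving the cited results of Isaacs in the special case of a central subgroup; what the paper's version buys is brevity, at the cost of leaving the identification of ``$\vartheta$-good'' cosets with the condition $[g,D_G(g)]=1$ to the reader's reading of \cite{MI18}.
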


\begin{proof}
By Theorem 1.19 and Lemma 1.20 of \cite{MI18}, the number of irreducible constituents of $\vartheta^G$ equals the number of conjugacy classes of cosets $g Z(G) \in G/Z(G)$ that satisfy $[g,D_G(g)] = 1$.  Observe that if $g \in Z(G)$, then $[g,D_G(g)] = 1$.  Hence, the only way that there can be only one conjugacy class of elements of in $G/Z(G)$ satisfying this condition is if $[g,D_G(g)] \ne 1$ for all elements $g \in G \setminus Z(G)$. Since $\vartheta$ is fully ramified with respect to $G/Z(G)$ if and only if $\vartheta^G$ has a unique irreducible constituent, it follows that $\vartheta$ is fully ramified with respect to $G/Z(G)$ if and only if there is only one conjugacy class satisfying the condition.  This gives the desired result. 
\end{proof}

We get a slightly stronger statement without much difficulty.

\begin{lem}\label{fullyram}
Let $G$ be a group. If $\lambda \in \irr {{Z} (G)}$ is a character, then $\lambda$ is fully ramified with respect to $G/{Z} (G)$ if and only if $[g,D_G(g)] \nleq \ker(\lambda)$ for every element $g \in G \setminus Z(G)$.
\end{lem}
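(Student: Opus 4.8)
The plan is to reduce Lemma~\ref{fullyram} to Lemma~\ref{fr 1} by passing to the quotient $\bar G = G/\ker(\lambda)$. Write $K = \ker(\lambda)$. Since $K \le Z(G)$, the subgroup $K$ is normal in $G$, and $\lambda$ factors through a faithful character $\bar\lambda \in \irr{Z(G)/K}$. The first thing I would check is that $Z(\bar G) = Z(G)/K$: the inclusion $Z(G)/K \le Z(\bar G)$ is clear, and for the reverse one uses that $\lambda$ being fully ramified (or, in the other direction of the equivalence, the hypothesis on commutators) forces the center of $\bar G$ to be no larger than $Z(G)/K$ — actually this is exactly the subtle point, so let me instead phrase the reduction so that this is not needed as a standing hypothesis. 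The cleanest route: observe that $\lambda$ is fully ramified with respect to $G/Z(G)$ if and only if $\bar\lambda$ is fully ramified with respect to $\bar G / (Z(G)/K)$, and that the latter equals $\bar G / Z(\bar G)$ precisely when $Z(\bar G) = Z(G)/K$. So I would first prove the following clean claim: if $\lambda$ is fully ramified with respect to $G/Z(G)$, then $Z(G/K) = Z(G)/K$. Indeed, $\lambda^G$ has a unique irreducible constituent $\chi$, which is then faithful on $G/K$ with $\chi_{Z(G)/K}$ a multiple of $\bar\lambda$; a fully ramified character of $G/K$ over $Z(G)/K$ cannot remain fully ramified over a strictly larger central subgroup unless that subgroup is still $Z(G)/K$, and more directly $\chi$ vanishes off $Z(G)/K$ while a faithful irreducible character never vanishes on its center, so $Z(G/K) \subseteq Z(G)/K$.

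Granting that, the forward direction runs as follows. Assume $\lambda$ is fully ramified with respect to $G/Z(G)$. By the claim, $Z(\bar G) = Z(G)/K$, and $\bar\lambda$ is faithful and fully ramified with respect to $\bar G/Z(\bar G)$. Apply Lemma~\ref{fr 1} to $\bar G$: we get $[\bar g, D_{\bar G}(\bar g)] \ne 1$ for every $\bar g \in \bar G \setminus Z(\bar G)$. Now I would translate this back. For $g \in G \setminus Z(G)$ we have $\bar g \notin Z(\bar G)$, and the key identification is $D_{\bar G}(\bar g) = D_G(g)/K$: this follows because $[x,g] \in Z(G)$ (for $x \in G$) forces $[\bar x, \bar g] \in Z(G)/K = Z(\bar G)$, and conversely $[\bar x, \bar g] \in Z(\bar G)$ means $[x,g]K \subseteq Z(G)$, hence $[x,g] \in Z(G)$ since $K \le Z(G)$; so the preimage of $D_{\bar G}(\bar g)$ is exactly $D_G(g)$. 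Therefore $[\bar g, D_{\bar G}(\bar g)] \ne 1$ says precisely that $[g, D_G(g)] \nleq K = \ker(\lambda)$, which is what we want.

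For the converse, assume $[g, D_G(g)] \nleq \ker(\lambda)$ for every $g \in G \setminus Z(G)$. I want to show $\lambda$ is fully ramified with respect to $G/Z(G)$. Here I cannot invoke the claim at the start, so I would argue directly with the Gallagher counting of Lemma~\ref{fr 1} applied to $\bar G = G/K$, using the faithful character $\bar\lambda$. The number of irreducible constituents of $\bar\lambda^{\bar G}$ equals the number of $\bar G$-conjugacy classes of cosets $\bar g Z(\bar G)$ with $[\bar g, D_{\bar G}(\bar g)] = 1$; and $\bar\lambda^{\bar G} = (\lambda^G)^{-}$ has the same number of constituents as $\lambda^G$ (constituents of $\lambda^G$ all contain $K$ in their kernel since $\lambda$ is trivial on $K$). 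The coset $\bar g Z(\bar G)$ with $\bar g$ central contributes the trivial class; for any other class, $\bar g \notin Z(\bar G) \supseteq Z(G)/K$, so in particular $g \notin Z(G)$, whence by hypothesis $[g, D_G(g)] \nleq K$. Using $D_{\bar G}(\bar g) \supseteq D_G(g)/K$ (the preimage computation above gives equality when $Z(\bar G) = Z(G)/K$, but in general $D_{\bar G}(\bar g)$ contains the image of $D_G(g)$), we get $[\bar g, D_{\bar G}(\bar g)] \supseteq [g, D_G(g)]K/K \ne 1$. Hence the only contributing class is the trivial one, so $\lambda^G$ has a unique irreducible constituent, i.e.\ $\lambda$ is fully ramified with respect to $G/Z(G)$.

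The main obstacle is the bookkeeping around $Z(G/\ker\lambda)$ versus $Z(G)/\ker\lambda$ and the corresponding comparison of $D_{\bar G}(\bar g)$ with the image of $D_G(g)$. If one is not careful, the equivalence between "fully ramified over $G/Z(G)$" and "$\bar\lambda$ fully ramified over $\bar G/Z(\bar G)$" can appear to fail, but the Gallagher counting formula is robust enough to push through both directions once one notes that $\lambda$-constituents of $\lambda^G$ biject with $\bar\lambda$-constituents of $\bar\lambda^{\bar G}$ and that central cosets always satisfy the vanishing-commutator condition. Everything else is routine group theory.
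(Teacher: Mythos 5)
Your overall strategy---pass to $\bar G = G/\ker(\lambda)$ so that $\lambda$ becomes faithful and then invoke Lemma~\ref{fr 1}---is exactly the paper's strategy, and your forward direction is correct and complete: the unique constituent $\chi$ of $\lambda^G$ is faithful modulo $K=\ker(\lambda)$ (its kernel lies in $Z(G)$ because $\chi$ vanishes off $Z(G)$, and meets $Z(G)$ in $K$), it vanishes off $Z(G)/K$, and it cannot vanish on $Z(G/K)$; this establishes $Z(G/K)=Z(G)/K$, after which $D_{\bar G}(\bar g)=D_G(g)/K$ and Lemma~\ref{fr 1} yields the commutator condition.

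The converse, however, has a genuine (though easily repaired) gap. You assert that ``the number of irreducible constituents of $\bar\lambda^{\bar G}$ equals the number of classes of cosets $\bar g Z(\bar G)$ with $[\bar g, D_{\bar G}(\bar g)]=1$,'' but Lemma~\ref{fr 1} provides this count only for a faithful character of the \emph{full} center $Z(\bar G)$, whereas your $\bar\lambda$ lives on $Z(G)/K$, which at that point you have explicitly declined to identify with $Z(\bar G)$. If $Z(\bar G)$ were strictly larger, $\bar\lambda$ would not even be a character of $Z(\bar G)$, and the counting statement you write down (with cosets and the sets $D_{\bar G}(\bar g)$ all defined relative to $Z(\bar G)$) is not the one Lemma~\ref{fr 1} supplies. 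The repair is one line, and it is precisely what the paper does first: for $g\in G\setminus Z(G)$ the hypothesis gives $[g,G]\ge[g,D_G(g)]\nleq K$, so $\bar g\notin Z(\bar G)$; hence $Z(\bar G)\le Z(G)/K$, and since the reverse containment is automatic, $Z(\bar G)=Z(G)/K$. With that in hand, $\bar\lambda$ is a faithful character of $Z(\bar G)$, one has $D_{\bar G}(\bar g)=D_G(g)/K$, and Lemma~\ref{fr 1} applies verbatim to finish. So: right approach and a sound forward direction, but in the converse you must first derive $Z(G/K)=Z(G)/K$ from the hypothesis before running the counting.
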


\begin{proof}
Let $Z = {Z} (G)$ and let $K = \ker (\lambda)$.  Suppose first that $\lambda$ is fully ramified with respect to $G/Z$.  Since $\lambda$ is fully ramified with respect to $G/Z$, it follows that $Z/K = {Z} (G/K)$.  Applying Lemma \ref{fr 1}, we have that $[gK,D_{G/K} (gK)] \ne 1$ for all cosets $gK \in G/K \setminus Z/K$.  It is not difficult to see that this implies that $[g,D_G (g)] \nleq K$ for all elements $g \in G \setminus Z$.  Conversely, suppose that $[g,D_G (g)] \nleq K$ for all $g \in G \setminus Z$.  Hence, we have $[gK,D_{G/K} (gK)] \ne 1$ for all $gK \in G/K \setminus Z/K$.  This implies that $[gK,G/K] \ne 1$ for all cosets $gK \in G/K \setminus Z/K$, and so ${Z} (G/K) \le Z/K$.  Since $Z/K \le {Z}(G/K)$ obviously holds, we have ${Z} (G/K) = Z/K$.  Notice that $\lambda$ is a faithful character of $Z/K$, so we may apply Lemma \ref{fr 1} to see that $\lambda$ is fully ramified with respect to $\displaystyle \frac {G/K}{Z/K} \cong \frac GZ$. 
\end{proof}

Let $G$ be a group, fix a character $\chi \in \irr G$, and write $\chi_{Z(G)} = \chi(1)\lambda$ for some character $\lambda \in \irr {Z(G)}$.  Note that $\ker(\lambda) = \ker(\chi)\cap Z(G)$.  Consider an element $g\in G$.  Since $[g,D_G (g)] \le Z(G)$, we have $[g,D_G(g)] \nleq \ker(\lambda)$ if and only if $[g,D_G (g)] \nleq \ker(\chi)$.   Furthermore, $[g,D_G (g)] \nleq \ker (\chi)$ if and only if there exists an element $x \in G$ so that $[g,x] \in Z(G) \setminus \ker (\chi)$.  Hence, Lemma~\ref{fullyram} can be equivalently stated as follows.

\begin{lem}\label{fullyram2}
Let $G$ be a group. A character $\chi \in \irr G$ is fully ramified over $Z(G)$ if and only if for every element $g \in G \setminus {Z}(G)$, there exists an element $x \in G$ for which $[g,x] \in Z(G) \setminus \ker (\chi)$.	
\end{lem}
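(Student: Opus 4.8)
The plan is to deduce this statement directly from Lemma~\ref{fullyram}, trading the irreducible character $\chi$ of $G$ for the linear character of the centre that it determines. Since $Z = Z(G)$ is central, $\chi_Z$ is automatically homogeneous, so we may write $\chi_Z = \chi(1)\lambda$ for a unique linear character $\lambda \in \irr Z$; evaluating at $z \in Z$ gives $\chi(z) = \chi(1)\lambda(z)$, whence $\ker(\lambda) = \ker(\chi) \cap Z$. The one substantive step is to observe that each of the conditions ``$\chi$ is fully ramified over $Z$'' and ``$\lambda$ is fully ramified with respect to $G/Z$'' is equivalent to the single numerical identity $\chi(1)^2 = |G:Z|$. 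Indeed, $|\chi(z)| = \chi(1)$ for $z \in Z$, so the defining sum $[\chi,\chi]_G = 1$ always dominates $(|Z|/|G|)\,\chi(1)^2$, with equality exactly when $\chi$ vanishes on $G \setminus Z$; thus $\chi$ is fully ramified over $Z$ iff $\chi(1)^2 = |G:Z|$. On the other hand, Frobenius reciprocity gives $[\lambda^G,\chi] = [\lambda,\chi_Z] = \chi(1)$ while $\lambda^G(1) = |G:Z|\,\lambda(1)$, so $\lambda^G$ is a (necessarily $\chi(1)$-fold) multiple of the single constituent $\chi$ precisely when $\chi(1)^2 = |G:Z|$; that is, $\lambda$ is fully ramified with respect to $G/Z$ iff $\chi(1)^2 = |G:Z|$. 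Hence the two ramification conditions coincide. (If preferred, this coincidence may instead be quoted from the standard theory of fully ramified sections, e.g.\ \cite{MI18}.)

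Granting this, I would apply Lemma~\ref{fullyram} to $\lambda$, which says $\lambda$ is fully ramified with respect to $G/Z$ exactly when $[g,D_G(g)] \nleq \ker(\lambda)$ for every $g \in G \setminus Z$, and then rewrite the condition on $[g,D_G(g)]$. Because $[g,x] \in Z$ whenever $x \in D_G(g)$, we have $[g,D_G(g)] \le Z$, so $[g,D_G(g)] \le \ker(\lambda) = \ker(\chi)\cap Z$ if and only if $[g,D_G(g)] \le \ker(\chi)$; the condition therefore reads $[g,D_G(g)] \nleq \ker(\chi)$. Finally, $[g,D_G(g)]$ is generated by the central elements $[g,x]$ with $x \in D_G(g)$, so it fails to lie inside $\ker(\chi)$ exactly when one such generator does, i.e.\ exactly when there is an $x \in D_G(g)$ --- equivalently an $x \in G$ with $[g,x] \in Z$ --- for which $[g,x] \notin \ker(\chi)$. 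This is precisely the statement that there exists $x \in G$ with $[g,x] \in Z \setminus \ker(\chi)$, and chaining these equivalences over all $g \in G \setminus Z$ yields the lemma.

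I expect the main (indeed only) obstacle to be the first step, the identification of ``$\chi$ fully ramified over $Z$'' with ``$\lambda$ fully ramified with respect to $G/Z$''; every later step is a formal manipulation of subgroups and kernels. Even this step is light, however, because $N = Z(G)$ is central: $\lambda$ is automatically $G$-invariant, and the ramification index is pinned down to $\sqrt{|G:Z|}$ from either side, so none of the subtler phenomena in the general theory of fully ramified characters arise.
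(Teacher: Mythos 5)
Your proof is correct and follows essentially the same route as the paper: write $\chi_{Z(G)} = \chi(1)\lambda$, note $\ker(\lambda) = \ker(\chi) \cap Z(G)$, apply Lemma~\ref{fullyram} to $\lambda$, and translate $[g,D_G(g)] \nleq \ker(\lambda)$ into the existence of a suitable $x$. The only difference is that you explicitly justify the identification of ``$\chi$ fully ramified over $Z(G)$'' with ``$\lambda$ fully ramified with respect to $G/Z(G)$'' via the degree equation $\chi(1)^2 = |G:Z(G)|$, a step the paper treats as immediate; that justification is accurate and harmless.
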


This yields the desired characterization of central type characters.

\begin{thm}\label{fullyramthm}
The character $\chi \in \irr G$ has central type if and only if for every element $g \in G \setminus {Z}(\chi)$, there exists an element $x \in G$ for which $[g,x] \in Z(\chi) \setminus \ker (\chi)$.
\end{thm}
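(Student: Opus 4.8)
The plan is to obtain Theorem~\ref{fullyramthm} by applying Lemma~\ref{fullyram2} to the quotient $G/\ker(\chi)$ and then translating the resulting condition back up to $G$ via the correspondence theorem. Write $K = \ker(\chi)$ and $\overline{G} = G/K$, and recall the standard fact that $Z(\chi)/K = Z(\overline{G})$; in particular $\chi$, viewed as a character of $\overline{G}$, is faithful, and its center as a character of $\overline{G}$ is exactly the group center $Z(\overline{G})$.

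First I would unwind the definition of central type: by definition $\chi$ has central type precisely when $\chi \in \irr{\overline{G}}$ is fully ramified over $Z(\overline{G})$. Since $Z(\overline{G})$ is the center of the group $\overline{G}$, Lemma~\ref{fullyram2}, applied with $\overline{G}$ in place of $G$ and with the faithful character $\chi \in \irr{\overline{G}}$, applies verbatim: $\chi$ is fully ramified over $Z(\overline{G})$ if and only if for every coset $gK \in \overline{G} \setminus Z(\overline{G})$ there exists a coset $xK \in \overline{G}$ with $[gK,xK] \in Z(\overline{G}) \setminus \ker(\chi)$, where here $\ker(\chi)$ denotes the kernel of $\chi$ as a character of $\overline{G}$.

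Next I would translate this condition back to $G$. The cosets $gK$ lying outside $Z(\overline{G}) = Z(\chi)/K$ correspond exactly to the elements $g \in G \setminus Z(\chi)$. Because $\chi$ is faithful on $\overline{G}$, its kernel there is the trivial subgroup $\{K\}$ of $\overline{G}$, so the requirement $[gK,xK] \in Z(\overline{G}) \setminus \{K\}$ says precisely that $[g,x]K \in Z(\chi)/K$ together with $[g,x] \notin K$, i.e.\ that $[g,x] \in Z(\chi) \setminus \ker(\chi)$. Assembling these equivalences gives exactly the asserted statement.

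I do not expect a genuine obstacle here: the substantive content is already contained in Lemma~\ref{fullyram2} (hence ultimately in Lemma~\ref{fr 1} and Gallagher's counting results). The only point requiring care is the bookkeeping between $G$ and $G/\ker(\chi)$ — invoking $Z(\chi)/\ker(\chi) = Z(G/\ker(\chi))$ correctly and checking that the commutator membership condition is preserved in both directions under the quotient map — and this is routine.
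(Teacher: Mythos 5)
Your argument is correct and follows essentially the same route as the paper: both apply Lemma~\ref{fullyram2} to the quotient $G/\ker(\chi)$ with the faithful character $\chi$, use $Z(G/\ker(\chi)) = Z(\chi)/\ker(\chi)$, and translate the commutator condition back to $G$ via the correspondence theorem. The bookkeeping you carry out explicitly is exactly the step the paper dismisses as ``easy to see,'' so nothing is missing.
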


\begin{proof}
Note that $\chi$ is a faithful irreducible character of $G/\ker (\chi)$ and ${Z} (G/\ker (\chi)) = {Z} (\chi)/\ker (\chi)$. Thus we see from Lemma \ref{fullyram2} that $\chi$, regarded as a characer of $G/\ker(\chi)$, has central type if and only if for every element $g\in G\setminus Z(\chi)$, there exists an element $x\in G$ for which $1\ne [g,x]\ker(\chi)\in Z(G/\ker(\chi))$. It is easy to see that this is equivalent to the statement that was to be proved. 
\end{proof} 

\begin{rem}
	Observe that Theorem~\ref{fullyramthm} implies the well-known result that $\chi$ has central type if $G/Z(\chi)$ is abelian (see \cite[Theorem 2.31]{MI76}, for example).
\end{rem}

Before proceeding, we discuss monolithic groups and characters. 
We need one more result to prove Theorem~\ref{gvz thm}.  This result is proved in our paper \cite{ourpre3}. 

\begin{thm}\label{mono nilp}
The group $G$ is nilpotent if and only if $Z(\chi) > \ker(\chi)$ for each nonprincipal, monolithic character $\chi\in\irr{G}$.
\end{thm}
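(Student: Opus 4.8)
The plan is to treat the two implications separately; the forward one is immediate, and the reverse one is where the real work lies.

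For the forward direction, suppose $G$ is nilpotent and let $\chi\in\irr G$ be nonprincipal and monolithic. Then $G/\ker(\chi)$ is a nontrivial quotient of a nilpotent group, hence nontrivial and nilpotent, and therefore has nontrivial center. Since $Z(\chi)/\ker(\chi)=Z(G/\ker(\chi))$, this already gives $Z(\chi)>\ker(\chi)$, and nothing more is needed here.

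For the reverse direction I would argue by contraposition: assuming $G$ is not nilpotent, I will produce a nonprincipal monolithic character $\chi$ with $Z(\chi)=\ker(\chi)$ (recall $\ker(\chi)\le Z(\chi)$ always, so this exhibits a failure of the condition). Since $G$ is not nilpotent, not every chief factor of $G$ is centralized by $G$, so I may fix a chief factor $H/K$ with $[H,G]\nleq K$. Thus the set $\mathcal{N}$ of normal subgroups $N\trianglelefteq G$ for which $G/N$ has a \emph{noncentral} minimal normal subgroup is nonempty ($K\in\mathcal{N}$), and I choose $L\in\mathcal{N}$ maximal, with $M/L$ a noncentral minimal normal subgroup of $G/L$.

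The crux — and the step I expect to be the main obstacle — is to show that $G/L$ is monolithic with monolith $M/L$. If not, $G/L$ has a minimal normal subgroup $M'/L\ne M/L$; minimality forces $M\cap M'=L$, so $MM'/M'\cong M/L$ as $G$-groups, whence $MM'/M'$ is a minimal normal subgroup of $G/M'$, and it is noncentral because $[M,G]\le M$ together with $M\cap M'=L$ and $[M,G]\nleq L$ forces $[M,G]\nleq M'$. Then $M'\in\mathcal{N}$ with $M'>L$, contradicting maximality. Once $G/L$ is known to be monolithic with noncentral monolith $M/L$, its center must be trivial (a nontrivial center would contain a minimal normal subgroup, forcing $M/L\le Z(G/L)$, contradicting noncentrality). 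Finally, since $G/L$ is monolithic, $L=\ker(\chi)$ for some $\chi\in\irr G$; this $\chi$ is monolithic, and nonprincipal since $G/L\ne 1$, and $Z(\chi)/L=Z(G/L)=1$ gives $Z(\chi)=L=\ker(\chi)$, completing the contrapositive and hence the theorem.
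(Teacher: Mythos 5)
The paper does not actually prove this theorem; it is quoted from the authors' separate preprint \cite{ourpre3}, so there is no in-paper argument to compare yours against. That said, your proof is correct and self-contained. The forward direction is the standard observation that $Z(\chi)/\ker(\chi)=Z(G/\ker(\chi))$ together with the nontriviality of the center of a nontrivial nilpotent group (monolithicity is not even needed there). In the contrapositive direction, the key step --- choosing $L$ maximal among normal subgroups $N$ with $G/N$ having a noncentral minimal normal subgroup, and showing $G/L$ must then be monolithic --- checks out: if $M'/L$ were a second minimal normal subgroup, then $M\cap M'=L$, the $G$-isomorphism $MM'/M'\cong M/L$ makes $MM'/M'$ a minimal normal subgroup of $G/M'$, and $[M,G]\le M'$ would force $[M,G]\le M\cap M'=L$, contradicting noncentrality of $M/L$; so $M'\in\mathcal{N}$ violates maximality. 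The remaining steps (a noncentral monolith forces trivial center, and a monolithic quotient realizes $L$ as the kernel of an irreducible character, a fact the paper itself records) are standard and correctly applied, yielding $Z(\chi)=\ker(\chi)=L$ as required.
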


	

We now prove Theorem \ref{gvz thm}.

\begin{proof}[Proof of Theorem~\ref{gvz thm}]
First note the the statement (1) implies (2) follows immediately from Theorem~\ref{fullyramthm}.
	
Next we show that (2) implies (3). Let $\chi\in\irr{G}$ be monolithic. By Theorem~\ref{fullyramthm}, $\chi$ has central type. In particular $\chi(1)^2=\norm{G:Z(\chi)}$, from which we deduce that $Z(\chi)>\ker(\chi)$ if $\chi$ is nonprincipal. Thus $G$ is nilpotent by Theorem~\ref{mono nilp}. Now, let $N$ be a normal subgroup of $G$ for which $G/N$ is monolithic. Then $G/N$ has a faithful irreducible character, and thus $N=\ker(\chi)$ for some character $\chi\in\irr{G}$. Let $g\in G$ such that $[g,G]\nleq N$. Then $gN\notin Z(G/N)=Z(\chi)/N$ and so $g\notin Z(\chi)$. By (1), there exists $x\in G$ such that $[g,x]\in Z(\chi)\setminus N$. Since $Z(\chi)/N=Z(G/N)$, we see that $[[g,x],G]\le N$. 
	
To complete the proof, we show that (3) implies (1). Fix a prime $p$ that divides $\norm{G}$, a Sylow subgroup $P \in \mathrm{Syl}_p (G)$, and a character $\psi \in \irr P$.  Consider the character $\xi = \psi \times \mathbbm{1}_H \in \irr G$, where $H$ is a normal $p$-complement of $G$. Then $G/\ker(\xi) \cong P/\ker(\psi)$ is monolithic, by \cite[Theorem 2.32]{MI76}.  So $\xi$ is fully ramified over $Z (\xi) = Z(\psi) \times H$ by Theorem~\ref{fullyramthm}, and this implies that $\psi$ is fully ramified over $Z (\psi)$.  Now, consider a character $\chi \in \irr G$. To show that  $G$ is a GVZ-group, it suffices to show that $\chi$ is fully ramified over $Z (\chi)$.  Suppose that $G = P_1 \times \dotsb \times P_r$ is a factorization of $G$ into a direct product of its Sylow subgroups. Then there exist characters $\nu_i \in \irr {P_i}$ so that $\chi =\nu_1 \times \dotsb \times \nu_r$.  Observe that $Z(\chi) = Z(\nu_1) \times \dotsb \times Z(\nu_r)$.  We have already shown that each $\nu_i$ is fully ramified over $Z(\nu_i)$ and so it follows that $\chi$ is fully ramified over $Z(\chi)$, as desired.  This proves (1).
\end{proof}


\begin{thebibliography}{99}
	
	

\bibitem{ourpre3} S. T. Burkett and M. L. Lewis, Characters with nontrivial center modulo their kernel, preprint.
	
\bibitem{ourpre} S. T. Burkett and M. L. Lewis, GVZ-groups, flat groups, and CM-groups, preprint.

\bibitem{ourpre2} S. T. Burkett and M. L. Lewis, Partial GVZ-groups, preprint.
	
	
	
	
\bibitem{DJ} F. R. DeMeyer and G. J. Janusz, Finite groups with an irreducible character of large degree, Math. Z. 108 (1969), 145-153.
	
\bibitem{espuelas}  A. Espuelas, On certain groups of central type, Proc. Amer. Math. Soc. 97 (1986), 16-18.
	
\bibitem{gagola} S. M. Gagola, Jr., Characters fully ramified over a normal subgroup, Pacific J. Math. 55 (1974), 107-126.
	
\bibitem{IH} R. B. Howlett and I. M. Isaacs, On groups of central type, Math. Z. 179 (1982), 552-569. 
	
\bibitem{MI76} I. M. Isaacs, Character theory of finite groups, Dover Publications, Inc., New York, 1994.
	
\bibitem{MI18} I. M. Isaacs, Characters of solvable groups, American Mathematical Society, Providence, RI, 2018.
	
	
	
	
	\bibitem{ML19gvz} M.~L. Lewis, Groups where the centers of the irreducible characters form a chain, { Monatsh. Math.} 192 (2020), 371--399.
	
	
	
	
	\bibitem{OnoType} M. Murai, \newblock Characterizations of {$p$}-nilpotent groups.  Osaka J. Math. 31 (1994), 1--8.
	
	
	\bibitem{AN12gvz} A. Nenciu, Isomorphic character tables of nested GVZ-groups, J. Algebra Appl. 11 (2012), 1250033, 12 pp.
	
	\bibitem{AN16gvz} A. Nenciu, Nested GVZ-groups, J. Group Theory 19 (2016),  693-704.
	
	\bibitem{flatness} H. Tandra, and W. Moran, Flatness conditions on finite {$p$}-groups, {Comm. Algebra}, {32} (2004), 2215--2224.
	
	
	
	
	
	
\end{thebibliography}
\end{document}

